\documentclass[a4paper,english]{article}
\usepackage[a4paper]{geometry}
\usepackage{amsthm,amsmath,amssymb,mathtools}
\usepackage{enumerate,enumitem,csquotes,verbatim}
\usepackage{setspace}
\usepackage[numbers]{natbib}
\usepackage{hyperref}
\usepackage{color}
\usepackage{tikz}
\usepackage{multirow}

\pagestyle{plain}
\linespread{1.2}
\setlength{\parskip}{\medskipamount}

\theoremstyle{plain}
\newtheorem*{theorem*}{Theorem}
\newtheorem{theorem}{Theorem}
\newtheorem{lemma}[theorem]{Lemma}

\newtheorem*{claim*}{Claim}

\newtheorem{problem}[theorem]{Problem}

\theoremstyle{definition}

\theoremstyle{remark}
\newtheorem*{remark}{Remark}

\def\N{\mathbb{N}}
\def\Z{\mathbb{Z}}

\newcommand*{\abs}[1]{\lvert#1\rvert}

\title{On the Existence of Zero-Sum Perfect Matchings of Complete Graphs}

\author{Teeradej Kittipassorn\thanks{\,Department of Mathematics and Computer Science, Faculty of Science, Chulalongkorn University, Bangkok 10330, Thailand; \texttt{teeradej.k@chula.ac.th}.}
  \and Panon Sinsap\thanks{\,Department of Mathematics and Computer Science, Faculty of Science, Chulalongkorn University, Bangkok 10330, Thailand; \texttt{panon.sinsap@gmail.com}.}}

\begin{document}
\maketitle

\begin{abstract}
    In this paper, we prove that given a $2$-edge-coloured complete graph $K_{4n}$ that has the same number of edges of each colour, we can always find a perfect matching with an equal number of edges of each colour. This solves a problem posed by Caro, Hansberg, Lauri, and Zarb. The problem is also independently solved by Ehard, Mohr, and Rautenbach.
\end{abstract}

\section{Introduction}

Note that in this paper, we will use the word `matching' when in fact we mean `perfect matching'.

For an edge-colouring function $f:E(G)\to S$ of a graph $G$ where $S\subseteq\Z$ and a subgraph $H$ of $G$, if $\sum_{e\in E(H)} f(e)=0$ then $H$ is called a \emph{zero-sum subgraph} of $G$.

The research in zero-sum problems can be traced back to the three theorems that give them the algebraic foundation. These are the Erdős-Ginzberg-Ziv Theorem~\cite{MR3618568}, the Cauchy-Davenport Theorem~\cite{Dave}, and Chevalley's Theorem~\cite{MR3069644}. Early zero-sum results concern with the sum taken in additive group $\Z_k$, the area is called Zero-sum Ramsey Theory. This theory studies the zero-sum Ramsey number $R(G,\Z_k)$ which is the smallest number $n$ such that in every $\Z_k$-edge-colouring of $r$-uniform hypergraph on $n$ vertices $K_n^{(r)}$ there exists a zero-sum modulo $k$ copy of $G$. It also studies the zero-sum bipartite Ramsey number $B(G,\Z_k)$ which is the smallest number $n$ such that in every $\Z_k$-edge-colouring of $K_{n,n}$ there exists a zero-sum modulo $k$ copy of $G$. For more complete developments of the topic consult~\cite{MR1261194,MR1388634}.

In~\cite{MR1295790}, Caro gave the complete characterization of the zero-sum modulo $2$ Ramsey number $R(G,\Z_2)$. In~\cite{MR1647802}, Caro and Yuster gave the characterization of zero-sum modulo $2$ bipartite Ramsey numbers. Along with~\cite{MR1694458} and \cite{MR3090519}, these four papers completely solved the zero-sum Ramsey theory over $\Z_2$. Caro and Yuster~\cite{MR3436948} were the first to consider zero-sum problems over $\Z$. Recently, several variants of the zero-sum problems have been studied (see, e.g.~\cite{MR3962015,MR3861785}).

In~\cite{caro2020zerosum}, Caro, Hansberg, Lauri, and Zarb had studied zero-sum subgraphs where $S=\{-1,1\}$ from various host graphs and various kinds of subgraphs. They then proved what they call the `Master Theorem' which covers many results of this kind. However there is a remarkable variation of zero-sum subgraph problem that has not been decided by their work in that paper. So they posed it at the end of the paper and the problem is the following:

\begin{problem}
Suppose $f:E(K_{4n})\to\{-1,1\}$ is such that it is a zero-sum graph. Does a zero-sum matching always exist?\label{p1}
\end{problem}

Observe that this problem essentially wants us to find a matching that has an equal number of edges that was assigned with $-1$ and $1$ out of a complete graph of degree $4n$ that had been assigned an equal number of $-1$ and $1$ to their edges. This allow us to discard the arithmetic meanings of $-1$ and $1$ and replace them with general colour names. In this paper, we choose to use black and red.

Our main result is the following theorem whose proof is equivalent to the solution of Problem~\ref{p1}.

\begin{theorem}
For any $2$-edge-colouring of $K_{4n}$ with an equal number of edges of each colour, there exists a matching with an equal number of edges of each colour.\label{t2}
\end{theorem}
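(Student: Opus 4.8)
The plan is to reformulate the statement and then run a discrete intermediate-value argument over the set of perfect matchings. Encode the colouring as $c\colon E(K_{4n})\to\{+1,-1\}$ (black $=+1$, red $=-1$) and, for a perfect matching $M$, set $g(M)=\sum_{e\in M}c(e)$. Since $M$ has exactly $2n$ edges, $g(M)=2\,(\#\text{black edges of }M)-2n$ is always even, and the goal is to produce a matching with $g(M)=0$. First I would establish that both signs occur. Every edge of $K_{4n}$ lies in exactly $(4n-3)!!$ perfect matchings (the perfect matchings of the complementary $K_{4n-2}$), so $\sum_{M}g(M)=(4n-3)!!\sum_{e}c(e)=0$ because the colouring is zero-sum. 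Hence either some matching already has $g=0$, or there exist matchings $M_{+}$ and $M_{-}$ with $g(M_{+})>0>g(M_{-})$.

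Next I would reduce the theorem to a connectivity statement: it suffices to join $M_{+}$ to $M_{-}$ by a sequence of perfect matchings $M_{+}=N_{0},N_{1},\dots,N_{k}=M_{-}$ in which $\lvert g(N_{i+1})-g(N_{i})\rvert\le 2$ for every $i$. Because $g$ takes only even values and changes by at most $2$ along such a sequence, while its endpoints straddle $0$, some $N_{i}$ must satisfy $g(N_{i})=0$. The natural elementary move is a \emph{$4$-cycle swap}: pick two edges $\{a,b\},\{c,d\}\in M$ and replace them by $\{a,c\},\{b,d\}$ (or by $\{a,d\},\{b,c\}$), which is legal since $K_{4n}$ is complete. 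Such a swap changes $g$ by $[c(ac)+c(bd)]-[c(ab)+c(cd)]$, an even number in $\{0,\pm2,\pm4\}$, and the symmetric difference $M\triangle M'$ of any two matchings decomposes into alternating cycles that can be realised by successive swaps; the freedom to use arbitrary chords lets me choose, at each stage, which swap to perform.

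The main obstacle is exactly the possibility of a forced jump of $\pm4$ that leaps over $0$. A single $K_{4}$ already shows the danger is real in principle: colouring $\{ab,cd,ac,bd\}$ black and $\{ad,bc\}$ red makes every perfect matching of those four vertices have $g\in\{+2,-2\}$, never $0$. My plan to defuse this is to exploit completeness together with global balance: whenever every decreasing $4$-cycle swap available to the current matching changes $g$ by $-4$, I would enlarge the active set from four to six vertices (bringing in a third matching edge) and locate, among the fifteen perfect matchings of the resulting $K_{6}$, a move that decreases $g$ by exactly $2$. The crux is a local lemma asserting that such a refining move always exists as long as $g\ne 0$; I expect to prove it by a case analysis on the colour patterns of the relevant copies of $K_{4}$ and $K_{6}$, using the hypothesis that colours are \emph{globally} balanced to rule out the rigid configurations (such as the $K_{4}$ above) that create a gap. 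Establishing this refinement lemma — upgrading coarse $\pm4$ mobility into genuine $\pm2$ mobility — is where I expect essentially all of the difficulty to lie.

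Finally I would assemble the pieces: the averaging step supplies matchings of both signs (or a zero-sum matching outright), and the connectivity step, powered by the refinement lemma, shows that the value $0$ cannot be skipped, yielding the desired matching with $g=0$. It is worth recording why $K_{4n}$, rather than $K_{4n+2}$, is the correct setting for this argument: a perfect matching of $K_{4n}$ has an even number $2n$ of edges, so $g$ is even and $0$ is a value of the right parity, whereas a perfect matching of $K_{4n+2}$ has oddly many edges and can never be balanced, so no intermediate-value scheme of this kind could succeed there.
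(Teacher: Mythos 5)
You have set up a sound framework---the $\pm1$ encoding, the parity observation, the averaging argument that both signs of $g$ occur (every edge lies in $(4n-3)!!$ perfect matchings, so $\sum_M g(M)=0$), and the discrete intermediate-value scheme over $4$-cycle swaps---and you have correctly located the one real obstacle: forced jumps of $\pm4$ that leap over $0$. But the proposal stops exactly there. Your ``refinement lemma'' (whenever $g(M)\neq 0$, some rearrangement of at most three matching edges changes $g$ by exactly $\mp2$) is not proved; it is conjectured, with only a plan to ``case-analyse colour patterns of the relevant copies of $K_4$ and $K_6$ using global balance.'' That lemma \emph{is} the theorem: everything else in the proposal is routine, and indeed once such a lemma is available the averaging step becomes superfluous, since one can simply walk any starting matching monotonically down to $g=0$---which is precisely the structure of the paper's proof. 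The paper's entire effort goes into establishing this kind of local mobility, and it is not short: it needs a swapping lemma showing that if red edges outnumber black edges in $E(V_R(M),V_B(M))$ then a swap changing the difference by $-2$ exists; it needs the global hypothesis in a concrete counting form (e.g.\ if $G[V_B(M)]$ is monochromatic and $b(M)>r(M)$, then $e(G[V_B(M)])>e(G[V_R(M)])$ forces a red majority between the two sides); and for the delicate case $b(M)-r(M)=2$ it needs a deeply nested structural analysis, including a parity argument producing an odd number of black edges inside $G[V_R(M)]$, a lower bound of $4n$ red edges inside $G[V_B(M)]$, and explicitly constructed two-step moves.

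There is also a concrete reason to doubt that your lemma is even correctly formulated. The paper's corrective moves are not all confined to six vertices: in one case (its Case 3.2.2.2.2.1) the move consists of two successive swaps on \emph{four} matching edges (eight vertices), first increasing the difference from $2$ to $4$ and then decreasing it by $4$ to $0$. So a corrective move within three matching edges is not known to exist in every configuration; at minimum, proving your six-vertex statement (or discovering that it must be relaxed to four matching edges, or to non-monotone two-step moves) requires exactly the kind of exhaustive analysis that constitutes the published proof. As it stands, your proposal is a correct and well-motivated reduction of the problem plus an unproven claim that carries all of the difficulty, so it is a plan rather than a proof.
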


Recently, Problem \ref{p1} had been independently resolved by Ehard, Mohr, and Rautenbach~\cite{ehard2020low}.

\section{Terminology}
To facilitate the language of our proof, we introduce the following notations and terminologies.

For a graph $G$, we define $\mathcal{M}(G)$ to denote the set of all matchings in $G$.

In $K_{4n}$, $n\in\N$, we define the operation $S:\mathcal{M}(G)\times V(G)^4\to \mathcal{M}(G)$ by
    $$S(M,u,v,x,y)=
    \begin{cases}
        (V(M),E(M)\cup\{ux,vy\}-\{uv,xy\}) &\text{if }uv\in M \text{ and }xy\in M,\\
        M &\text{otherwise.}
    \end{cases}$$
This operation will be called a \emph{swapping} (see Figure \ref{f1}).

\begin{figure}[h]
\centering
\begin{tikzpicture}
\draw[gray, thick] (0,1.5) -- (1.5,1.5);
\draw[gray, thick] (0,0) -- (1.5,0);
\filldraw[black] (0,1.5) circle (2pt) node[anchor=east] {$u$};
\filldraw[black] (1.5,1.5) circle (2pt) node[anchor=west] {$v$};
\filldraw[black] (0,0) circle (2pt) node[anchor=east] {$x$};
\filldraw[black] (1.5,0) circle (2pt) node[anchor=west] {$y$};
\filldraw[] (0.75,-0.25) node[anchor=north] {$M$};

\draw[->,very thick] (2.5,0.75) -- (3.5,0.75);

\draw[gray, thick] (4.5,1.5) -- (4.5,0);
\draw[gray, thick] (6,1.5) -- (6,0);
\filldraw[black] (4.5,1.5) circle (2pt) node[anchor=east] {$u$};
\filldraw[black] (6,1.5) circle (2pt) node[anchor=west] {$v$};
\filldraw[black] (4.5,0) circle (2pt) node[anchor=east] {$x$};
\filldraw[black] (6,0) circle (2pt) node[anchor=west] {$y$};
\filldraw[] (5.25,-0.25) node[anchor=north] {$S(M,u,v,x,y)$};

\end{tikzpicture}
\caption{The result of a swapping. All other edges in the matching stays the same.}
\label{f1}
\end{figure}
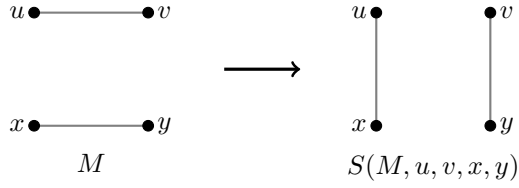

If $M$ is a matching extracted from $2$-edge-coloured $K_{4n}$, $V_B (M)$ will denote the set of all vertices of $M$ incident to a black edge in $M$, while $V_R (M)$ will denote the same thing for red.

For any disjoint subsets $S,T\subseteq V(G)$, $E(S,T)$ denotes the set of all edges with one endpoint in $S$ and one endpoint in $T$.

Let $b(M)$ and $r(M)$ denote the number of black edges and red edges in $M$ respectively.

Lastly, sometimes we will shorten the phrase `the difference between the numbers of edges of each colour' to merely `the difference'. As there will be only one kind of difference in this work, this should cause no ambiguity.

Now we have all the terminologies needed for our proof.

\section{Proof of the Theorem}

We first state an important observation as a lemma.

\begin{lemma}
For $M\in\mathcal{M}(G)$, if there are more red edges than black edges in $E(V_R (M),V_B (M))$, then we can make a swapping that will increase the number of red edges and decrease the number of black edges in $M$ by $1$ each. In particular, if there are more black edges than red edges in $M$ and there are more red edges than black edges in $E(V_R (M),V_B (M))$, then we can make a swapping that will reduce the difference by $2$.
\end{lemma}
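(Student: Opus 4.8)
The plan is to partition the cross edges in $E(V_R(M),V_B(M))$ into $2\times 2$ blocks, one for each pair consisting of a red edge and a black edge of $M$, and then run a short averaging argument. Write $p=r(M)$ and $q=b(M)$, so that $|V_R(M)|=2p$ and $|V_B(M)|=2q$, and hence $|E(V_R(M),V_B(M))|=4pq$. For a red edge $uv\in M$ and a black edge $xy\in M$, the four cross edges $ux,uy,vx,vy$ form a copy of $K_{2,2}$, which I will call a \emph{block}; since every cross edge joins a red-matched vertex to a black-matched vertex and each vertex of $M$ lies in a unique edge of $M$, these $pq$ blocks partition $E(V_R(M),V_B(M))$.

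Next I would record what a swap does to such a block. Since $uv,xy\in M$, both $S(M,u,v,x,y)$ and $S(M,u,v,y,x)$ are genuine swaps: the first replaces $uv,xy$ by $ux,vy$ and the second by $uy,vx$, and these two pairs are exactly the two perfect matchings of the block. Applying $S(M,u,v,x,y)$ deletes the red edge $uv$ and the black edge $xy$ from $M$ and inserts $ux$ and $vy$, so it increases the number of red edges and decreases the number of black edges by $1$ each precisely when $ux$ and $vy$ are both red. Call a block \emph{good} if one of its two perfect matchings consists of two red edges. The key observation is that a block with at least three red edges is automatically good: the two perfect matchings partition the four edges of the block into two pairs, so three red edges force one pair to be entirely red.

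To finish, I would compare counts. By hypothesis there are more red than black edges among the $4pq$ cross edges, so the number of red cross edges exceeds $2pq$. If no block were good, then every block would contain at most two red edges, and summing over the $pq$ blocks would give at most $2pq$ red cross edges, a contradiction. Hence some block is good, and performing the corresponding swap yields a matching with one more red and one fewer black edge, proving the first statement. For the \enquote{in particular} clause, if in addition $b(M)>r(M)$ then, since $b(M)+r(M)=2n$ forces $b(M)-r(M)$ to be a positive even number, we have $b(M)-r(M)\ge 2$; after the swap the counts become $b(M)-1$ and $r(M)+1$, so the difference drops to $b(M)-r(M)-2$, that is, by exactly $2$.

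The only real content is the block decomposition together with the remark that three red edges in a $K_{2,2}$ force a monochromatic perfect matching; once these are in place the averaging step is immediate, so I do not expect a genuine obstacle here.
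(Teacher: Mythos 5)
Your proposal is correct and takes essentially the same route as the paper: both decompose the cross edges $E(V_R(M),V_B(M))$ into $K_{2,2}$ blocks indexed by pairs of oppositely coloured matching edges, use an averaging/counting argument to find a block with strictly more red than black edges, and perform the swap inside that block (the paper's ``latter varieties'' are exactly your good blocks with at least three red edges). The only real difference is presentational: where the paper enumerates six colour varieties of a block via figures and tables, you replace the enumeration with the pigeonhole observation that three red edges among the two perfect matchings of a block force one of those matchings to be entirely red.
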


\begin{proof}

Consider two edges of $M$, one black and one red, $uv$ and $xy$ respectively. The edges joining between the vertices of these two edges will be among the following six varieties

\begin{figure}[h]
    \centering
    \begin{tikzpicture}
    \draw[black,very thick] (0,1) -- (0,0);
    \draw[black,very thick] (1,1) -- (1,0);
    \draw[black,very thick] (0,1) -- (1,0);
    \draw[black,very thick] (1,1) -- (0,0);
    \draw[black,dashed,very thick] (0,1) -- (1,1);
    \draw[red,dashed,very thick] (0,0) -- (1,0);
    \filldraw[black] (0,1) circle (2pt) node[anchor=east] {$u$};
    \filldraw[black] (1,1) circle (2pt) node[anchor=west] {$v$};
    \filldraw[black] (0,0) circle (2pt) node[anchor=east] {$x$};
    \filldraw[black] (1,0) circle (2pt) node[anchor=west] {$y$};
    
    \draw[black,very thick] (3,1) -- (3,0);
    \draw[red,very thick] (4,1) -- (4,0);
    \draw[black,very thick] (3,1) -- (4,0);
    \draw[black,very thick] (4,1) -- (3,0);
    \draw[black,dashed,very thick] (3,1) -- (4,1);
    \draw[red,dashed,very thick] (3,0) -- (4,0);
    \filldraw[black] (3,1) circle (2pt) node[anchor=east] {$u$};
    \filldraw[black] (4,1) circle (2pt) node[anchor=west] {$v$};
    \filldraw[black] (3,0) circle (2pt) node[anchor=east] {$x$};
    \filldraw[black] (4,0) circle (2pt) node[anchor=west] {$y$};
    
    \draw[black,very thick] (6,1) -- (6,0);
    \draw[red,very thick] (7,1) -- (7,0);
    \draw[black,very thick] (6,1) -- (7,0);
    \draw[red,very thick] (7,1) -- (6,0);
    \draw[black,dashed,very thick] (6,1) -- (7,1);
    \draw[red,dashed,very thick] (6,0) -- (7,0);
    \filldraw[black] (6,1) circle (2pt) node[anchor=east] {$u$};
    \filldraw[black] (7,1) circle (2pt) node[anchor=west] {$v$};
    \filldraw[black] (6,0) circle (2pt) node[anchor=east] {$x$};
    \filldraw[black] (7,0) circle (2pt) node[anchor=west] {$y$};
    
    \end{tikzpicture}
    \caption{The first three varieties.}
\end{figure}

\begin{figure}[h]
    \centering
    \begin{tikzpicture}
    \draw[black,very thick] (0,1) -- (0,0);
    \draw[black,very thick] (1,1) -- (1,0);
    \draw[red,very thick] (0,1) -- (1,0);
    \draw[red,very thick] (1,1) -- (0,0);
    \draw[black,dashed,very thick] (0,1) -- (1,1);
    \draw[red,dashed,very thick] (0,0) -- (1,0);
    \filldraw[black] (0,1) circle (2pt) node[anchor=east] {$u$};
    \filldraw[black] (1,1) circle (2pt) node[anchor=west] {$v$};
    \filldraw[black] (0,0) circle (2pt) node[anchor=east] {$x$};
    \filldraw[black] (1,0) circle (2pt) node[anchor=west] {$y$};
    
    \draw[black,very thick] (3,1) -- (3,0);
    \draw[red,very thick] (4,1) -- (4,0);
    \draw[red,very thick] (3,1) -- (4,0);
    \draw[red,very thick] (4,1) -- (3,0);
    \draw[black,dashed,very thick] (3,1) -- (4,1);
    \draw[red,dashed,very thick] (3,0) -- (4,0);
    \filldraw[black] (3,1) circle (2pt) node[anchor=east] {$u$};
    \filldraw[black] (4,1) circle (2pt) node[anchor=west] {$v$};
    \filldraw[black] (3,0) circle (2pt) node[anchor=east] {$x$};
    \filldraw[black] (4,0) circle (2pt) node[anchor=west] {$y$};
    
    \draw[red,very thick] (6,1) -- (6,0);
    \draw[red,very thick] (7,1) -- (7,0);
    \draw[red,very thick] (6,1) -- (7,0);
    \draw[red,very thick] (7,1) -- (6,0);
    \draw[black,dashed,very thick] (6,1) -- (7,1);
    \draw[red,dashed,very thick] (6,0) -- (7,0);
    \filldraw[black] (6,1) circle (2pt) node[anchor=east] {$u$};
    \filldraw[black] (7,1) circle (2pt) node[anchor=west] {$v$};
    \filldraw[black] (6,0) circle (2pt) node[anchor=east] {$x$};
    \filldraw[black] (7,0) circle (2pt) node[anchor=west] {$y$};
    
    \end{tikzpicture}
    \caption{The first three varieties.}
\end{figure}

Observe that in the first three varieties the number of black edges are no fewer than the numbers of red edges and that the resulting matchings from $S(M,u,v,x,y)$ and $S(M,u,v,y,x)$ will never reduce the difference between the numbers of black edges and red edges that we had from $M$.

\begin{center}
\begin{tabular}{|c | c | c | c|} 
\hline
\begin{tikzpicture}
    \draw[black, thick] (0,2) -- (2,2);
    \draw[red, thick] (0,0) -- (2,0);
    \filldraw[black] (0,2) circle (2pt) node[anchor=east] {$u$};
    \filldraw[black] (2,2) circle (2pt) node[anchor=west] {$v$};
    \filldraw[black] (0,0) circle (2pt) node[anchor=east] {$x$};
    \filldraw[black] (2,0) circle (2pt) node[anchor=west] {$y$};
\end{tikzpicture}
 & 
 \begin{tikzpicture}
    \draw[black,dashed,very thick] (0,2) -- (0,0);
    \draw[black,dashed,very thick] (2,2) -- (2,0);
    \draw[black,dashed,very thick] (0,2) -- (2,0);
    \draw[black,dashed,very thick] (2,2) -- (0,0);
    \draw[black,very thick] (0,2) -- (2,2);
    \draw[red,very thick] (0,0) -- (2,0);
    \filldraw[black] (0,2) circle (2pt) node[anchor=east] {$u$};
    \filldraw[black] (2,2) circle (2pt) node[anchor=west] {$v$};
    \filldraw[black] (0,0) circle (2pt) node[anchor=east] {$x$};
    \filldraw[black] (2,0) circle (2pt) node[anchor=west] {$y$};
 \end{tikzpicture} 
 &
 \begin{tikzpicture}
    \draw[black,dashed,very thick] (0,2) -- (0,0);
    \draw[red,dashed,very thick] (2,2) -- (2,0);
    \draw[black,dashed,very thick] (0,2) -- (2,0);
    \draw[black,dashed,very thick] (2,2) -- (0,0);
    \draw[black,very thick] (0,2) -- (2,2);
    \draw[red,very thick] (0,0) -- (2,0);
    \filldraw[black] (0,2) circle (2pt) node[anchor=east] {$u$};
    \filldraw[black] (2,2) circle (2pt) node[anchor=west] {$v$};
    \filldraw[black] (0,0) circle (2pt) node[anchor=east] {$x$};
    \filldraw[black] (2,0) circle (2pt) node[anchor=west] {$y$};
 \end{tikzpicture} 
 &
 \begin{tikzpicture}
    \draw[black,dashed,very thick] (0,2) -- (0,0);
    \draw[red,dashed,very thick] (2,2) -- (2,0);
    \draw[black,dashed,very thick] (0,2) -- (2,0);
    \draw[red,dashed,very thick] (2,2) -- (0,0);
    \draw[black,very thick] (0,2) -- (2,2);
    \draw[red,very thick] (0,0) -- (2,0);
    \filldraw[black] (0,2) circle (2pt) node[anchor=east] {$u$};
    \filldraw[black] (2,2) circle (2pt) node[anchor=west] {$v$};
    \filldraw[black] (0,0) circle (2pt) node[anchor=east] {$x$};
    \filldraw[black] (2,0) circle (2pt) node[anchor=west] {$y$};
 \end{tikzpicture}
 \\ 
 \hline
 \multirow{-5}*{$S(M,u,v,x,y)$}
 & 
 \begin{tikzpicture}
    \draw[black, thick] (0,2) -- (0,0);
    \draw[black, thick] (2,2) -- (2,0);
    \filldraw[black] (0,2) circle (2pt) node[anchor=east] {$u$};
    \filldraw[black] (2,2) circle (2pt) node[anchor=west] {$v$};
    \filldraw[black] (0,0) circle (2pt) node[anchor=east] {$x$};
    \filldraw[black] (2,0) circle (2pt) node[anchor=west] {$y$};
\end{tikzpicture} 
 & 
\begin{tikzpicture}
    \draw[black, thick] (0,2) -- (0,0);
    \draw[red, thick] (2,2) -- (2,0);
    \filldraw[black] (0,2) circle (2pt) node[anchor=east] {$u$};
    \filldraw[black] (2,2) circle (2pt) node[anchor=west] {$v$};
    \filldraw[black] (0,0) circle (2pt) node[anchor=east] {$x$};
    \filldraw[black] (2,0) circle (2pt) node[anchor=west] {$y$};
\end{tikzpicture} 
 & 
\begin{tikzpicture}
    \draw[black, thick] (0,2) -- (0,0);
    \draw[red, thick] (2,2) -- (2,0);
    \filldraw[black] (0,2) circle (2pt) node[anchor=east] {$u$};
    \filldraw[black] (2,2) circle (2pt) node[anchor=west] {$v$};
    \filldraw[black] (0,0) circle (2pt) node[anchor=east] {$x$};
    \filldraw[black] (2,0) circle (2pt) node[anchor=west] {$y$};
\end{tikzpicture} 
 \\ 
 \hline
 \multirow{-5}*{$S(M,u,v,y,x)$}
 & 
\begin{tikzpicture}
    \draw[black, thick] (0,2) -- (2,0);
    \draw[black, thick] (2,2) -- (0,0);
    \filldraw[black] (0,2) circle (2pt) node[anchor=east] {$u$};
    \filldraw[black] (2,2) circle (2pt) node[anchor=west] {$v$};
    \filldraw[black] (0,0) circle (2pt) node[anchor=east] {$x$};
    \filldraw[black] (2,0) circle (2pt) node[anchor=west] {$y$};
\end{tikzpicture} 
 & 
\begin{tikzpicture}
    \draw[black, thick] (0,2) -- (2,0);
    \draw[black, thick] (2,2) -- (0,0);
    \filldraw[black] (0,2) circle (2pt) node[anchor=east] {$u$};
    \filldraw[black] (2,2) circle (2pt) node[anchor=west] {$v$};
    \filldraw[black] (0,0) circle (2pt) node[anchor=east] {$x$};
    \filldraw[black] (2,0) circle (2pt) node[anchor=west] {$y$};
\end{tikzpicture} 
 & 
\begin{tikzpicture}
    \draw[black, thick] (0,2) -- (2,0);
    \draw[red, thick] (2,2) -- (0,0);
    \filldraw[black] (0,2) circle (2pt) node[anchor=east] {$u$};
    \filldraw[black] (2,2) circle (2pt) node[anchor=west] {$v$};
    \filldraw[black] (0,0) circle (2pt) node[anchor=east] {$x$};
    \filldraw[black] (2,0) circle (2pt) node[anchor=west] {$y$};
\end{tikzpicture} 
 \\ 
 \hline
\end{tabular}
\end{center}

While in the last three varieties the numbers of red edges are no fewer than the numbers of black edges and that at least one of the resulting matchings from $S(M,u,v,x,y)$ or $S(M,u,v,y,x)$ reduces the difference between the numbers of black edges and red edges that we had from $M$.

\begin{center}
\begin{tabular}{|c | c | c | c|} 
\hline
\begin{tikzpicture}
    \draw[black, thick] (0,2) -- (2,2);
    \draw[red, thick] (0,0) -- (2,0);
    \filldraw[black] (0,2) circle (2pt) node[anchor=east] {$u$};
    \filldraw[black] (2,2) circle (2pt) node[anchor=west] {$v$};
    \filldraw[black] (0,0) circle (2pt) node[anchor=east] {$x$};
    \filldraw[black] (2,0) circle (2pt) node[anchor=west] {$y$};
\end{tikzpicture}
 & 
 \begin{tikzpicture}
    \draw[black,dashed,very thick] (0,2) -- (0,0);
    \draw[black,dashed,very thick] (2,2) -- (2,0);
    \draw[red,dashed,very thick] (0,2) -- (2,0);
    \draw[red,dashed,very thick] (2,2) -- (0,0);
    \draw[black,very thick] (0,2) -- (2,2);
    \draw[red,very thick] (0,0) -- (2,0);
    \filldraw[black] (0,2) circle (2pt) node[anchor=east] {$u$};
    \filldraw[black] (2,2) circle (2pt) node[anchor=west] {$v$};
    \filldraw[black] (0,0) circle (2pt) node[anchor=east] {$x$};
    \filldraw[black] (2,0) circle (2pt) node[anchor=west] {$y$};
 \end{tikzpicture} 
 &
 \begin{tikzpicture}
    \draw[black,dashed,very thick] (0,2) -- (0,0);
    \draw[red,dashed,very thick] (2,2) -- (2,0);
    \draw[red,dashed,very thick] (0,2) -- (2,0);
    \draw[red,dashed,very thick] (2,2) -- (0,0);
    \draw[black,very thick] (0,2) -- (2,2);
    \draw[red,very thick] (0,0) -- (2,0);
    \filldraw[black] (0,2) circle (2pt) node[anchor=east] {$u$};
    \filldraw[black] (2,2) circle (2pt) node[anchor=west] {$v$};
    \filldraw[black] (0,0) circle (2pt) node[anchor=east] {$x$};
    \filldraw[black] (2,0) circle (2pt) node[anchor=west] {$y$};
 \end{tikzpicture} 
 &
 \begin{tikzpicture}
    \draw[red,dashed,very thick] (0,2) -- (0,0);
    \draw[red,dashed,very thick] (2,2) -- (2,0);
    \draw[red,dashed,very thick] (0,2) -- (2,0);
    \draw[red,dashed,very thick] (2,2) -- (0,0);
    \draw[black,very thick] (0,2) -- (2,2);
    \draw[red,very thick] (0,0) -- (2,0);
    \filldraw[black] (0,2) circle (2pt) node[anchor=east] {$u$};
    \filldraw[black] (2,2) circle (2pt) node[anchor=west] {$v$};
    \filldraw[black] (0,0) circle (2pt) node[anchor=east] {$x$};
    \filldraw[black] (2,0) circle (2pt) node[anchor=west] {$y$};
 \end{tikzpicture}
 \\ 
 \hline
 \multirow{-5}*{$S(M,u,v,x,y)$}
 & 
 \begin{tikzpicture}
    \draw[black, thick] (0,2) -- (0,0);
    \draw[black, thick] (2,2) -- (2,0);
    \filldraw[black] (0,2) circle (2pt) node[anchor=east] {$u$};
    \filldraw[black] (2,2) circle (2pt) node[anchor=west] {$v$};
    \filldraw[black] (0,0) circle (2pt) node[anchor=east] {$x$};
    \filldraw[black] (2,0) circle (2pt) node[anchor=west] {$y$};
\end{tikzpicture} 
 & 
\begin{tikzpicture}
    \draw[black, thick] (0,2) -- (0,0);
    \draw[red, thick] (2,2) -- (2,0);
    \filldraw[black] (0,2) circle (2pt) node[anchor=east] {$u$};
    \filldraw[black] (2,2) circle (2pt) node[anchor=west] {$v$};
    \filldraw[black] (0,0) circle (2pt) node[anchor=east] {$x$};
    \filldraw[black] (2,0) circle (2pt) node[anchor=west] {$y$};
\end{tikzpicture} 
 & 
\begin{tikzpicture}
    \draw[red, thick] (0,2) -- (0,0);
    \draw[red, thick] (2,2) -- (2,0);
    \filldraw[black] (0,2) circle (2pt) node[anchor=east] {$u$};
    \filldraw[black] (2,2) circle (2pt) node[anchor=west] {$v$};
    \filldraw[black] (0,0) circle (2pt) node[anchor=east] {$x$};
    \filldraw[black] (2,0) circle (2pt) node[anchor=west] {$y$};
\end{tikzpicture} 
 \\ 
 \hline
 
 \multirow{-5}*{$S(M,u,v,y,x)$} 
 & 
\begin{tikzpicture}
    \draw[red, thick] (0,2) -- (2,0);
    \draw[red, thick] (2,2) -- (0,0);
    \filldraw[black] (0,2) circle (2pt) node[anchor=east] {$u$};
    \filldraw[black] (2,2) circle (2pt) node[anchor=west] {$v$};
    \filldraw[black] (0,0) circle (2pt) node[anchor=east] {$x$};
    \filldraw[black] (2,0) circle (2pt) node[anchor=west] {$y$};
\end{tikzpicture} 
 & 
\begin{tikzpicture}
    \draw[red, thick] (0,2) -- (2,0);
    \draw[red, thick] (2,2) -- (0,0);
    \filldraw[black] (0,2) circle (2pt) node[anchor=east] {$u$};
    \filldraw[black] (2,2) circle (2pt) node[anchor=west] {$v$};
    \filldraw[black] (0,0) circle (2pt) node[anchor=east] {$x$};
    \filldraw[black] (2,0) circle (2pt) node[anchor=west] {$y$};
\end{tikzpicture} 
 & 
\begin{tikzpicture}
    \draw[red, thick] (0,2) -- (2,0);
    \draw[red, thick] (2,2) -- (0,0);
    \filldraw[black] (0,2) circle (2pt) node[anchor=east] {$u$};
    \filldraw[black] (2,2) circle (2pt) node[anchor=west] {$v$};
    \filldraw[black] (0,0) circle (2pt) node[anchor=east] {$x$};
    \filldraw[black] (2,0) circle (2pt) node[anchor=west] {$y$};
\end{tikzpicture} 
 \\ 
 \hline
\end{tabular}
\end{center}

So if there are more red edges than black edges joining between $V_B (M)$ and $V_R (M)$, we can guarantee the existence of a pair of edges, one black and one red, in $M$ such that the edges joining between them form one of the latter three (in fact two) varieties.

Using this pair of edges and appropriate order of vertices, we can make a swapping that will increase the number of red edges and decrease the number of black edges that we had from $M$ by 1 each.

Note that in any cases, we have replaced one black edge and one red edge with two red edges, so that the difference between the numbers of black edges and red edges that we had from $M$ will change by 2 in the resulting matching. If initially there are more black edges than red edges in $M$, the difference will reduce by 2. But if initially there are more red edges than black edges in $M$, the difference will increase by 2.
\end{proof}

\begin{remark}
If we are to read the proof of this lemma with the colour red and black in place of each other, the same thing will happen to the colour black when there are more black edges than red edges joining between $V_B (M)$ and $V_R (M)$.
\end{remark}

We are now ready to prove Theorem \ref{t2}.

\begin{proof}[Proof of Theorem \ref{t2}]

In our attempt to prove this statement, we will first take an arbitrary matching of $K_{4n}$, then gradually reduce the difference between the numbers of edges of each colour.

To achieve that, we will construct a finite sequence of matchings in $\mathcal{M}(G)$ which as our sequence progress $\abs{b(M)-r(M)}$, the difference between the numbers of edges of each colour, will gradually and strictly decrease until it reaches zero.

To start the proof, we first pick an arbitrary matching $M$ of $G$

We take this $M$ as $M_0$, the zeroth term of our sequence.

Next, we proceed to obtain next terms of our sequence by the following method.

For nonnegative integer $i$, if $M_i$ is a term in our sequence, it is without loss of generality to assume that $b(M_i)\geqslant r(M_i)$.

Now we have three cases to consider.

\textbf{Case 1:} $b(M_i)=r(M_i)$

In this case, we end our sequence and take $M_i$ as the matching we have been looking for.

\textbf{Case 2:} $b(M_i)-r(M_i)>2$

\textbf{Case 2.1:} $G[V_B (M_i)]$ is monochromatic.

We claim that there are more red edges than black edges joining between $V_B (M_i)$ and $V_R (M_i)$.

Since $b(M_i)>r(M_i)$, $\abs{V_B (M_i)}>\abs{V_R (M_i)}$ so there are more edges in $G[V_B (M_i)]$ than in $G[V_R (M_i)]$.

Recall that in our graph, the number of red edges and the number of black edges are equal.

Since $G[V_B (M_i)]$ is monochromatic(black) and $e(G[V_B (M_i)])>e(G[V_R (M_i)])$, there must be more red edges joining between $V_B (M_i)$ and $V_R (M_i)$ than black edges.

By applying the lemma to $M_i$, we can make a swapping that will reduce the difference by 2.

We take the resulting matching of this swapping as $M_{i+1}$ in our sequence.

\textbf{Case 2.2:} $G[V_B (M_i)]$ is not monochromatic.

Since $G[V_B (M_i)]$ is not monochromatic, there is a red edge, $ux$, in $G[V_B (M_i)]$.

Since $u,x\in V_B (M_i)$ and $ux$ is red, there must be $v,y\in V_B (M_i)$ such that $uv,xy\in M_i$.

We take $S(M_i,u,v,x,y)$ to be $M_{i+1}$ in our sequence.

This resulting matching will reduce the difference between the number of black edges and red edges by 2 or 4 compared to that of $M_i$, depending on the colour of $vy$ (see Figure \ref{f4}).

\begin{figure}[h]
    \centering
    \begin{tikzpicture}
    
    \draw[red, thick] (0,2) -- (2,2);
    \draw[black, thick] (0,0) -- (2,0);
    \filldraw[black] (0,2) circle (2pt) node[anchor=east] {$u$};
    \filldraw[black] (2,2) circle (2pt) node[anchor=west] {$x$};
    \filldraw[black] (0,0) circle (2pt) node[anchor=east] {$v$};
    \filldraw[black] (2,0) circle (2pt) node[anchor=west] {$y$};
    \filldraw[] (1,-0.5) node[anchor=north] {$vy$ is black};
    
    \draw[red, thick] (7,2) -- (9,2);
    \draw[red, thick] (7,0) -- (9,0);
    \filldraw[black] (7,2) circle (2pt) node[anchor=east] {$u$};
    \filldraw[black] (9,2) circle (2pt) node[anchor=west] {$x$};
    \filldraw[black] (7,0) circle (2pt) node[anchor=east] {$v$};
    \filldraw[black] (9,0) circle (2pt) node[anchor=west] {$y$};
    \filldraw[] (8,-0.5) node[anchor=north] {$vy$ is red};
    
    \draw[black, thick] (3.5,6) -- (3.5,4);
    \draw[black, thick] (5.5,4) -- (5.5,6);
    \draw[red,dashed,very thick] (3.5,6) -- (5.5,6);
    \draw[blue,dashed,very thick] (3.5,4) -- (5.5,4);
    \filldraw[black] (3.5,6) circle (2pt) node[anchor=east] {$u$};
    \filldraw[black] (5.5,6) circle (2pt) node[anchor=west] {$x$};
    \filldraw[black] (3.5,4) circle (2pt) node[anchor=east] {$v$};
    \filldraw[black] (5.5,4) circle (2pt) node[anchor=west] {$y$};
    
    \draw[->, very thick] (4.5,3.75) -- (2.5,2.5); 
    \draw[->, very thick] (4.5,3.75) -- (6.5,2.5); 
    
    \filldraw[] (4.5,-1.5) node[anchor=north] {If $vy$ is black, the difference reduces by 2. If $vy$ is red, the difference reduces by 4.};
    
\end{tikzpicture}

    \caption{Two possibilities of swapping.}
    
    \label{f4}
    
\end{figure}

\textbf{Case 3:} $b(M_i)-r(M_i)=2$

\textbf{Case 3.1:} $G[V_B (M_i)]$ is monochromatic.

The reasoning and execution of this case are the same as the case 2.1.

\textbf{Case 3.2:} $G[V_B (M_i)]$ is not monochromatic.

\textbf{Case 3.2.1:} There are more red edges than black edges joining between $V_B (M_i)$ and $V_R (M_i)$.

By applying the lemma to $M_i$, we can make a swapping that will reduce the difference by 2.

We take the resulting matching of this swapping as $M_{i+1}$ in our sequence.

\textbf{Case 3.2.2:} There are not more red edges than black edges joining between $V_B (M_i)$ and $V_R (M_i)$.

\textbf{Case 3.2.2.1:} There are $u,v,x,y\in V_B (M_i)$ such that $uv,xy\in E(M_i)$, $ux$ is red and $vy$ is black in $G$.

This case is the same as case 2.2 where $vy$ is black. We take $S(M_i,u,v,x,y)$ to be $M_{i+1}$.

This reduce the difference by 2, so that it becomes 0.

\textbf{Case 3.2.2.2:} For all $u,v,x,y\in V_B (M_i)$ such that $uv,xy\in E(M_i)$, if $ux$ is red then $vy$ is also red in $G$

We observe that in this case red edges always appear in pairs and each red edge only involve two edges of $M_i$, namely those that share a vertex with it. So if we count the number of red edges in $G[V_B (M_i)]$, it must be an even number.

As it will become important in the rest of the proof, let us make explicit that in this case there must be $n+1$ black edges and $n-1$ red edges in $M_i$.

So $\abs{V_B (M_i)}=2n+2$, $\abs{V_R (M_i)}=2n-2$, $e(G[V_B (M_i)])={{2n+2}\choose2}=2n^2+3n+1$, $e(G[V_R (M_i)])={{2n-2}\choose2}=2n^2-5n+3$, the number of edges joining between $V_B (M_i)$ and $V_R (M_i)$ is $(2n+2)(2n-2)=4n^2-4$, and finally the total number of edges of each colour is $\frac{1}{2}{{4n}\choose 2}=4n^2-n$.

\textbf{Case 3.2.2.2.1:} There are equal number of red edges and black edges joining between $V_B (M_i)$ and $V_R (M_i)$.

We claim that there are an odd number of black edges in $G[V_R (M_i)]$.
Note that there are an even number of red edges in $G[V_B (M_i)]$ and an even number of edges of each colour joining $V_B (M_i)$ and $V_R (M_i)$.

Since the argument is just a simple parity analysis, to avoid a verbose and confusing argument, we present the following table as our argument.

\begin{center}
 \begin{tabular}{||c c c c||} 
 \hline
 $n$ & total black edges & $e(G[V_B (M_i)])$ & black edges in $G[V_R (M_i)]$ \\ [0.5ex] 
 \hline\hline
 even & even & odd & odd \\ 
 \hline
 odd & odd & even & odd \\ 
 \hline
\end{tabular}
\end{center}

So there are an odd number of black edges in $G[V_R (M_i)]$.

Thus there are $p,q,r,s\in V_R (M_i)$ such that $pq,rs\in E(M_i)$, $pr$ is red and $qs$ is black in $G$.

Let $M'_i=S(M_i,p,q,r,s)$. Now $M'_i$ has $n+2$ black edges and $n-2$ red edges.

Observe that after the latest swapping occurs, those vertices and edges originally in $G[V_B (M_i)]$ are all contained in $G[V_B (M'_i)]$.

As a premise of this case (3.2) states that there is a red edge in $G[V_B (M_i)]$, there must be $u,v,x,y\in V_B (M_i)\subset V_B (M'_i)$ such that $uv,xy\in E(M_i)$ and $ux,vy$ are red in G.

Since $uv,xy\in E(M'_i)$, we take $M_{i+1}$ to be $S(M'_i,u,v,x,y)$.

This reduce the difference by 2, so that it becomes 0.

\textbf{Case 3.2.2.2.2:} There are more black edges than red edges joining between $V_B (M_i)$ and $V_R (M_i)$.

For the sake of clarity of how we divide our next cases, we will consider the question "What is the least number of red edges that have to be in $G[V_B (M_i)]?$".

So we will have to maximize the number of red edges outside of $G[V_B (M_i)]$.

Thus all the edges in $G[V_R (M_i)]$ and $\frac{4n^2-4}{2}-1=2n^2-3$ edges joining between $V_B (M_i)$ and $V_R (M_i)$ has to be red. Since there are $4n^2-n$ red edges in total, there are at least $(4n^2-n)-(2n^2-5n+3)-(2n^2-3)=4n$ red edges in $G[V_B (M_i)]$

\textbf{Case 3.2.2.2.2.1:} There are more than $4n$ red edges in $G[V_B (M_i)]$

Since there are more black edges than red edges joining $V_B (M_i)$ and $V_R (M_i)$. From our lemma, there must be a black edge $uv$ and a red edge $xy$ of $M_i$ such that $S(M_i,u,v,x,y)$ will increase the number of black edges from that of $M_i$ by one.

But before we make the swapping, we consider that there are $4n$ edges adjacent to $uv$ in $G[V_B (M_i)]$. So that there is a red edge not adjacent to $uv$.

Since, in this case (3.2.2.2), a red edge implies an existence of another red edge, there are $p,q,r,s\in V_B (M_i)-\{u,v\}$ such that $pq,rs\in E(M_i)$ and $pr,qs$ are red in $G$.

Let $M'_i=S(M_i,u,v,x,y)$ so that there are $n+2$ black edges and $n-2$ red edges.

We take $S(M'_i,p,q,r,s)$ to be our $M_{i+1}$.

The first swap increase the difference by 2 to be 4, then the second swap reduce the difference by 4 to 0.

\textbf{Case 3.2.2.2.2.2:} There are exactly $4n$ red edges in $G[V_B (M_i)]$

In this case there are $\frac{4n^2-4}{2}-1=2n^2-3$ red edges joining between $V_B (M_i)$ and $V_R (M_i)$ and $G[V_R (M_i)]$ is monochromatic (red).

\textbf{Case 3.2.2.2.2.2.1:} There is a pair of edges in $M_i$ of different colours such that the edges that connect between them is of the latter three varieties shown in the proof of our lemma.

In this case we are guaranteed a swapping that will reduce the difference between the numbers of black edges and red edges by 2 to 0. We take the resulting matching of that swapping to be $M_{i+1}$

\textbf{Case 3.2.2.2.2.2.2:} All pair of edges of different colours in $M_i$ have the edge of the first three varieties connect between them.

Since there are $2n^2-1$ black edges and $2n^2-3$ red edges joining between $V_B (M_i)$ and $V_R (M_i)$ and the edges are form just the first three varieties, there is the only one possibility.

That is there is a black $uv$ and a red $xy$ connecting to each other by edges of type two, while other pairs of edges in $M_i$ are connected by edges of type three in $G$.

Without loss of generality let $ux$ be red in $G$.

\textbf{Case 3.2.2.2.2.2.2.1:} There is a red edge not adjacent to $uv$ in $G[V_B (M_i)]$

As in the case 3.2.2.2.2.1, there must be $p,q,r,s\in V_B (M_i)-\{u,v\}$ such that $pq,rs\in E(M_i)$ and $pq,rs$ are red in $G$.

As in the case 3.2.2.2.2.1, we take $S(S(M_i,u,v,y,x),p,q,r,s)$ as our $M_{i+1}$.

This effectively reduce the difference from 2 to 0.

\textbf{Case 3.2.2.2.2.2.2.2:} All $4n$ red edges are adjacent to $uv$ in $G[V_B (M_i)]$

In $G[V_B (M_i)]$, $u$ and $v$ each connecting to $2n$ vertices apart from each other.

Thus they collectively involve $4n$ edges, so that all of those edges are red.

\textbf{Case 3.2.2.2.2.2.2.2.1:} There is a member of $V_B (M_i)-\{u,v\}$ that joins with $y$ by a black edge.

Let this member of $V_B (M_i)-\{u,v\}$ be called $z$.

Since $z\in V_B (M_i)$, there is a $w\in V_B (M_i)$ such that $zw\in E(M_i)$.

Now $vw$ is red and $vy,yz,zw$ are black in $G$.

We take $S(S(M_i,u,v,x,y),v,y,w,z)$ as our $M_{i+1}$ (see Figure \ref{f5}).

First swapping does not change the difference, while the second one reduce it by 2 to 0.

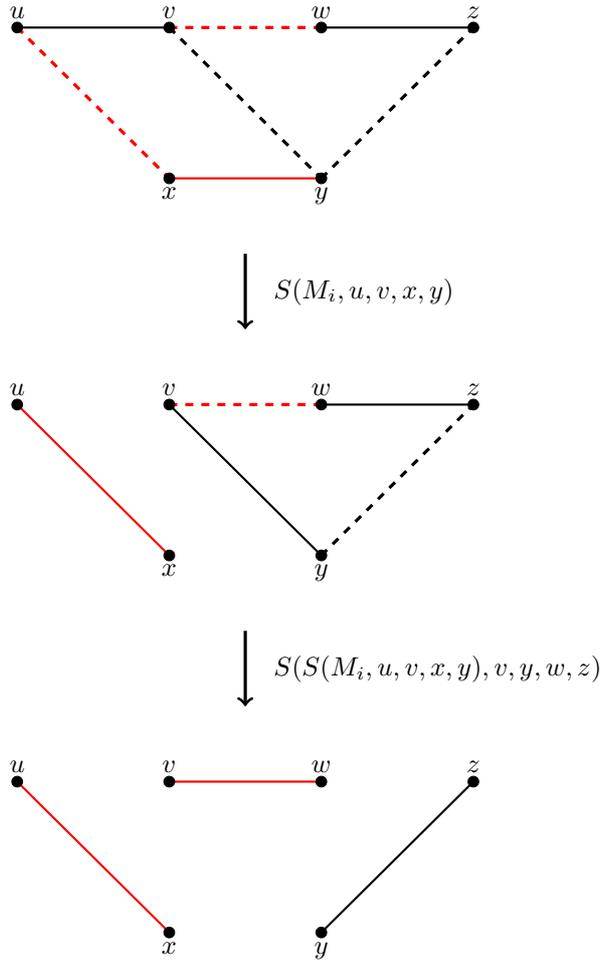
\begin{figure}[h]
    \centering
    \begin{tikzpicture}
        \draw[black,thick] (0,12) -- (2,12);
        \draw[red,dashed,very thick] (2,12) -- (4,12);
        \draw[black,thick] (4,12) -- (6,12);
        \draw[red,thick] (2,10) -- (4,10);
        \draw[red,dashed,very thick] (0,12) -- (2,10);
        \draw[black,dashed,very thick] (2,12) -- (4,10);
        \draw[black,dashed,very thick] (4,10) -- (6,12);
        \filldraw[black] (0,12) circle (2pt) node[anchor=south] {$u$};
        \filldraw[black] (2,12) circle (2pt) node[anchor=south] {$v$};
        \filldraw[black] (4,12) circle (2pt) node[anchor=south] {$w$};
        \filldraw[black] (6,12) circle (2pt) node[anchor=south] {$z$};
        \filldraw[black] (2,10) circle (2pt) node[anchor=north] {$x$};
        \filldraw[black] (4,10) circle (2pt) node[anchor=north] {$y$};
        
        \draw[->, very thick] (3,9) -- (3,8);
        \filldraw[] (3.25,8.5) node[anchor=west] {$S(M_i,u,v,x,y)$};
        
        \draw[red,dashed,very thick] (2,7) -- (4,7);
        \draw[black,thick] (4,7) -- (6,7);
        \draw[red, thick] (0,7) -- (2,5);
        \draw[black, thick] (2,7) -- (4,5);
        \draw[black,dashed,very thick] (4,5) -- (6,7);
        \filldraw[black] (0,7) circle (2pt) node[anchor=south] {$u$};
        \filldraw[black] (2,7) circle (2pt) node[anchor=south] {$v$};
        \filldraw[black] (4,7) circle (2pt) node[anchor=south] {$w$};
        \filldraw[black] (6,7) circle (2pt) node[anchor=south] {$z$};
        \filldraw[black] (2,5) circle (2pt) node[anchor=north] {$x$};
        \filldraw[black] (4,5) circle (2pt) node[anchor=north] {$y$};
        
        \draw[->, very thick] (3,4) -- (3,3);
        \filldraw[] (3.25,3.5) node[anchor=west] {$S(S(M_i,u,v,x,y),v,y,w,z)$};
        
        \draw[red, thick] (2,2) -- (4,2);
        \draw[red, thick] (0,2) -- (2,0);
        \draw[black, thick] (4,0) -- (6,2);
        \filldraw[black] (0,2) circle (2pt) node[anchor=south] {$u$};
        \filldraw[black] (2,2) circle (2pt) node[anchor=south] {$v$};
        \filldraw[black] (4,2) circle (2pt) node[anchor=south] {$w$};
        \filldraw[black] (6,2) circle (2pt) node[anchor=south] {$z$};
        \filldraw[black] (2,0) circle (2pt) node[anchor=north] {$x$};
        \filldraw[black] (4,0) circle (2pt) node[anchor=north] {$y$};
        
    \end{tikzpicture}
    \caption{Case 3.2.2.2.2.2.2.2.1}
    \label{f5}
    
\end{figure}

\textbf{Case 3.2.2.2.2.2.2.2.2:} All members of $V_B (M_i)-\{u,v\}$ joins with $y$ by red edges.

Observe that from our premises every red edges in $G[V_B (M_i)]$ have to have either $u$ or $v$ as an endpoint and that all the edge joining $u$ or $v$ with any member in $V_B (M_i)-\{u,v\}$ are red.

Now choose any $p,q\in V_B (M_i)-\{u,v\}$ the $pq$ is black while $up$ and $yq$ are red.

We take $S(S(M_i,u,v,y,x),u,y,p,q)$ as our $M_{i+1}$ (see Figure \ref{f6}).

First swapping increase the difference by 2 to be 4, then the second one reduce it by 4 to 0.

\begin{figure}[h]
    \centering
    \begin{tikzpicture}
        \draw[black,thick] (0,12) -- (2,12);
        \draw[red,dashed,very thick] (2,12) -- (4,12);
        \draw[black,thick] (4,12) -- (6,12);
        \draw[red,thick] (2,10) -- (4,10);
        \draw[black,dashed,very thick] (0,12) -- (2,10);
        \draw[black,dashed,very thick] (2,12) -- (4,10);
        \draw[red,dashed,very thick] (4,10) -- (6,12);
        \filldraw[black] (0,12) circle (2pt) node[anchor=south] {$v$};
        \filldraw[black] (2,12) circle (2pt) node[anchor=south] {$u$};
        \filldraw[black] (4,12) circle (2pt) node[anchor=south] {$p$};
        \filldraw[black] (6,12) circle (2pt) node[anchor=south] {$q$};
        \filldraw[black] (2,10) circle (2pt) node[anchor=north] {$x$};
        \filldraw[black] (4,10) circle (2pt) node[anchor=north] {$y$};
        
        \draw[->, very thick] (3,9) -- (3,8);
        \filldraw[] (3.25,8.5) node[anchor=west] {$S(M_i,u,v,y,x)$};
        
        \draw[red,dashed,very thick] (2,7) -- (4,7);
        \draw[black,thick] (4,7) -- (6,7);
        \draw[black, thick] (0,7) -- (2,5);
        \draw[black, thick] (2,7) -- (4,5);
        \draw[red,dashed,very thick] (4,5) -- (6,7);
        \filldraw[black] (0,7) circle (2pt) node[anchor=south] {$v$};
        \filldraw[black] (2,7) circle (2pt) node[anchor=south] {$u$};
        \filldraw[black] (4,7) circle (2pt) node[anchor=south] {$p$};
        \filldraw[black] (6,7) circle (2pt) node[anchor=south] {$q$};
        \filldraw[black] (2,5) circle (2pt) node[anchor=north] {$x$};
        \filldraw[black] (4,5) circle (2pt) node[anchor=north] {$y$};
        
        \draw[->, very thick] (3,4) -- (3,3);
        \filldraw[] (3.25,3.5) node[anchor=west] {$S(S(M_i,u,v,y,x),u,y,p,q)$};
        
        \draw[red, thick] (2,2) -- (4,2);
        \draw[black, thick] (0,2) -- (2,0);
        \draw[red, thick] (4,0) -- (6,2);
        \filldraw[black] (0,2) circle (2pt) node[anchor=south] {$v$};
        \filldraw[black] (2,2) circle (2pt) node[anchor=south] {$u$};
        \filldraw[black] (4,2) circle (2pt) node[anchor=south] {$p$};
        \filldraw[black] (6,2) circle (2pt) node[anchor=south] {$q$};
        \filldraw[black] (2,0) circle (2pt) node[anchor=north] {$x$};
        \filldraw[black] (4,0) circle (2pt) node[anchor=north] {$y$};
        
    \end{tikzpicture}
    \caption{Case 3.2.2.2.2.2.2.2.2}
    \label{f6}
    
\end{figure}

Now, every case, except for case 1 which is the terminal case, strictly reduces the difference between the numbers of black edges and red edges as we creating new term for our sequence.

Note that in no case that the difference were reduced by more than its value.

Thus as our sequence progress the difference strictly decreases and when it terminates, the last term of the sequence has the difference of 0.

That is by following this method, we can always guarantee a matching which has the same number of black edges and red edges.

This prove our theorem.
\end{proof}

\section{Concluding Remarks}

We have proved Theorem \ref{t2} which settles the problem posed in~\cite{caro2020zerosum}. Now we know that if we are given a $2$-edge-coloured complete graph of order $4n$ with the same number of edges of each colour, we can extract a matching that has the same number of edges of each colour.

We would like to pose two problems that are related to the result that we have proven. The first problem is a generalization of our result. In our theorem, we study only the case where the complete graph $K_{4n}$ is 2-edge-coloured and those two colours colour an equal number of edges. An obvious generalization of this problem is to consider the similar situation when there are more than two colours involved. Now we pose the following question which is a generalization of our result.

\begin{problem}
For any $k$-edge-colouring of $K_{2kn}$ such that there are an equal number of edges of each colour. Does there exist a matching such that there are an equal number of edges of each colour?
\end{problem}

The second problem comes from the fact that when we take a matching with an equal number of edges of each colour out of our original complete graph, we are left with a graph that has the same number of edges of each colour. One question that comes up is `can we take another such matching?'. If we can, can we continue until all edges are gone? If we cannot exhaust the edges with an arbitrary order of taking matchings out, is there any sequence of taking matchings out that would use every edges? This leads us to pose the following problem.

\begin{problem}
Given a $2$-edge-coloured $K_{4n}$ with an equal number of edges of each colour. Can the graph be decomposed into perfect matchings such that each matching has the same number of edges of each colour?
\end{problem}

\bibliographystyle{siam} 
\bibliography{citation}

\begin{thebibliography}{10}

\bibitem{MR1261194}
{\sc A.~Bialostocki}, {\em Zero sum trees: a survey of results and open
  problems}, in Finite and infinite combinatorics in sets and logic ({B}anff,
  {AB}, 1991), vol.~411 of NATO Adv. Sci. Inst. Ser. C Math. Phys. Sci., Kluwer
  Acad. Publ., Dordrecht, 1993, pp.~19--29.

\bibitem{MR1295790}
{\sc Y.~Caro}, {\em A complete characterization of the zero-sum (mod {$2$})
  {R}amsey numbers}, J. Combin. Theory Ser. A, 68 (1994), pp.~205--211.

\bibitem{MR1388634}
{\sc Y.~Caro}, {\em Zero-sum problems---a survey}, Discrete Math., 152 (1996),
  pp.~93--113.

\bibitem{caro2020zerosum}
{\sc Y.~Caro, A.~Hansberg, J.~Lauri, and C.~Zarb}, {\em On zero-sum spanning
  trees and zero-sum connectivity}, arXiv preprint arXiv:2007.08240,  (2020).

\bibitem{MR3962015}
{\sc Y.~Caro, A.~Hansberg, and A.~Montejano}, {\em Zero-sum {$K_m$} over {$\Bbb
  Z$} and the story of {$K_4$}}, Graphs Combin., 35 (2019), pp.~855--865.

\bibitem{MR3861785}
{\sc Y.~Caro, A.~Hansberg, and A.~Montejano}, {\em Zero-sum subsequences in
  bounded-sum {$\{-1,1\}$}-sequences}, J. Combin. Theory Ser. A, 161 (2019),
  pp.~387--419.

\bibitem{MR1647802}
{\sc Y.~Caro and R.~Yuster}, {\em The characterization of zero-sum (mod {$2$})
  bipartite {R}amsey numbers}, J. Graph Theory, 29 (1998), pp.~151--166.

\bibitem{MR1694458}
{\sc Y.~Caro and R.~Yuster}, {\em The uniformity space of hypergraphs and its
  applications}, Discrete Math., 202 (1999), pp.~1--19.

\bibitem{MR3436948}
{\sc Y.~Caro and R.~Yuster}, {\em On zero-sum and almost zero-sum subgraphs
  over {$\mathbb{Z}$}}, Graphs Combin., 32 (2016), pp.~49--63.

\bibitem{MR3069644}
{\sc C.~Chevalley}, {\em D\'{e}monstration d'une hypoth\`ese de {M}. {A}rtin},
  Abh. Math. Sem. Univ. Hamburg, 11 (1935), pp.~73--75.

\bibitem{Dave}
{\sc H.~Davenport}, {\em On the addition of residue classes}, J. London Math.
  Soc., 10 (1935), pp.~30--32.

\bibitem{ehard2020low}
{\sc S.~Ehard, E.~Mohr, and D.~Rautenbach}, {\em Low weight perfect matchings},
  arXiv preprint arXiv:2010.15418,  (2020).

\bibitem{MR3618568}
{\sc P.~Erd\"{o}s, A.~Ginzburg, and A.~Ziv}, {\em Theorem in the additive
  number theory}, Bull. Res. Council Israel Sect. F, 10F (1961), pp.~41--43.

\bibitem{MR3090519}
{\sc R.~M. Wilson and T.~W.~H. Wong}, {\em Diagonal forms of incidence matrices
  associated with {$t$}-uniform hypergraphs}, European J. Combin., 35 (2014),
  pp.~490--508.

\end{thebibliography}

\end{document}